\numberwithin{equation}{section}
\DeclareSymbolFont{cyrletters}{OT2}{wncyr}{m}{n}
\DeclareMathSymbol{\Sha}{\mathalpha}{cyrletters}{"58}
\numberwithin{equation}{section}
\begin{document}
\newcommand\A{\mathbb{A}}
\newcommand\C{\mathbb{C}}
\newcommand\G{\mathbb{G}}
\newcommand\N{\mathbb{N}}
\newcommand\T{\mathbb{T}}
\newcommand\sO{\mathcal{O}}
\newcommand\sE{{\mathcal{E}}}
\newcommand\tE{{\mathbb{E}}}
\newcommand\sF{{\mathcal{F}}}
\newcommand\sG{{\mathcal{G}}}
\newcommand\sH{{\mathcal{H}}}
\newcommand\sN{{\mathcal{N}}}
\newcommand\GL{{\mathrm{GL}}}
\newcommand\HH{{\mathrm{H}}}
\newcommand\mM{{\mathrm{M}}}
\newcommand\fS{\mathfrak{S}}
\newcommand\fP{\mathfrak{P}}
\newcommand\fQ{\mathfrak{Q}}
\newcommand\Qbar{{\bar{\Q}}}
\newcommand\sQ{{\mathcal{Q}}}
\newcommand\sP{{\mathbb{P}}}
\newcommand{\Q}{\mathbb{Q}}
\newcommand{\tH}{\mathbb{H}}
\newcommand{\Z}{\mathbb{Z}}
\newcommand{\R}{\mathbb{R}}
\newcommand{\F}{\mathbb{F}}
\newcommand\cP{\mathcal{P}}
\newcommand\cQ{\mathcal{Q}}
\newcommand\Gal{{\mathrm {Gal}}}
\newcommand\SL{{\mathrm {SL}}}
\newcommand\Hom{{\mathrm {Hom}}}
\newtheorem{thm}{Theorem}[section]
\newtheorem{ack}[thm]{Acknowledgement}
\newtheorem{theorem}[thm]{Theorem}
\newtheorem{cor}[thm]{Corollary}
\newtheorem{conj}[thm]{Conjecture}
\newtheorem{prop}[thm]{Proposition}
\newtheorem{lemma}[thm]{Lemma}
\theoremstyle{definition}
\newtheorem{definition}[thm]{Definition}
\newtheorem{remark}[thm]{Remark}
\newtheorem{example}[thm]{Example}
\newtheorem{claim}[thm]{Claim}	
\theoremstyle{remark}
\newtheorem*{fact}{Fact}
\author{ S. Krishnamoorthy}
\address{ S. Krishnamoorthy
\newline
INDIAN INSTITUTE OF SCIENCE EDUCATION AND RESEARCH, THIRUVANANTHAPURAM, INDIA.}
\email{srilakshmi@iisertvm.ac.in}
\author{R. Muneeswaran }
\address{ R. Muneeswaran
\newline
INDIAN INSTITUTE OF SCIENCE EDUCATION AND RESEARCH, THIRUVANANTHAPURAM, INDIA.}
\email{muneeswaran20@iisertvm.ac.in}
\title{The divisibility of the class number of the imaginary quadratic fields  $\mathbb{Q}(\sqrt{1-2m^k})$}
\subjclass[2010]{Primary: 11R29, Secondary: 11R11.}
\keywords{Class number, ideal class group, imaginary quadratic fields, Diophantine equation.}
\maketitle
\begin{abstract}
Let $h_{(m,k)}$ be the class number of $\mathbb{Q}(\sqrt{1-2m^k}).$
We prove that for any odd natural number $k,$ there exists $m_0$ such that $k \mid h_{(m,k)}$ 
for all odd $m > m_0.$ We also prove that for any odd $m \geq 3,$ $k \mid h_{(m,k)}$ (when 
$k$ and $1-2m^k$ square-free numbers) and  $p \mid h_{(m,p)}$ (except finitely many primes $p$). We deduce that for any pair of twin primes $p_1,p_2=p_1+2$, 
$p_1 \mid h_{(m,p_1)}$ or $p_2 \mid h_{(m,p_2)}.$
For any odd natural number $k$, we construct an infinite family of pairs of imaginary quadratic fields $\Q(\sqrt{d}), \Q(\sqrt{d+1})$ whose class numbers are divisible by $k$, which settles a generalized version of Iizuka's conjecture (cf : Conjecture [2.2]) for the case $n=1$.
\end{abstract}
\section{Introduction}
Let $K$ be a number field. The ideal class group $Cl_K$ is defined to be the quotient group $J_K/P_K$, where $J_K$ is the group of fractional ideals of $K$ and $P_K$ is the group of principal fractional ideals of $K$.The ideal class group $Cl_K$ is finite. The class number $h_K$ of a number field $K$ is the order of the ideal class group $Cl_K$. For any integer $n>1$, the Cohen-Lenstra heuristics \cite{cohen1984heuristics} predicts that the proportion of imaginary quadratic fields with class numbers divisible by $n$ is positive. Numerous authors have proved that for any natural number $n$, there are infinitely many quadratic fields whose class numbers are divisible by $n$.
(cf. \cite{MR85301,MR3069394,MR266898,MR335471}).
The exact structure of the class group can be studied by analyzing the divisibility properties of the class numbers.

The Birch Swinnerton-Dyer conjecture serves as an elliptic curve counterpart to the analytic class number formula. For any elliptic curve defined over $\mathbb{Q}$ with a rank of zero and a square-free conductor $N$, if a prime $p$ divides the order of $E(\mathbb{Q})$, and certain conditions on the Shafarevich-Tate group $ {\Sha}_d$ , the first author \cite{MR3529567} demonstrated that $p$ divides $|\Sha_d|$ if and only if $p$ divides the class number $h_K$ of the number field $K=\mathbb{Q}(\sqrt{-d})$.

A. Hoque, in his work \cite{MR4270672}, proved that, under specific conditions, the class number of the field $\mathbb{Q}(\sqrt{a^2-4p^n})$ is divisible by $n$. Numerous researchers have also investigated the class number divisibility for fields of the form $\mathbb{Q}(\sqrt{1-\mu^2m^n}),\ \mu\in\{1,2,\sqrt{2}\}$. (cf. \cite{louboutin2009divisibility,krishnamoorthy2021note,hoque2017divisibility,murty1999exponents,MR2843095}).

For the case $\mu=2$, B.H. Gross and D.E  Rohrlich \cite{gross} proved that for any odd integer $n>3$, there are infinitely many imaginary quadratic fields $\Q(\sqrt{1-4U^n})$ whose class numbers are divisible by $n$. In a notable work by S. Louboutin \cite{louboutin2009divisibility}, it was proved that for any integer $U \geq 2$ and an odd integer $n > 1$, the ideal class groups of imaginary quadratic fields, $\mathbb{Q}(\sqrt{1 - 4U^n})$, has an element of order $n$. 

For the case $\mu=1$, Murty \cite{RM99} proved that the class number of $\Q(\sqrt{1-U^n})$ is divisible by $n$,  if $1-U^n$ is square-free. A. Hoque \cite{MR4430107} proved that the class number of $\mathbb{Q}(\sqrt{1-V^n})$ is divisible by $n$ for odd values of $n\geq3$ and $V\geq3$, except for the case $(n,V)=(5,3)$. For the case $\mu=\sqrt{2}$, consider the imaginary quadratic field $\mathbb{Q}(\sqrt{1-2m^k})$, where $h_{(m,k)}$ denotes its class number. K. Chakraborty and A. Hoque \cite{hoque2017divisibility} proved that for any odd integer $m \geq 3$, the class number $h_{(m,3)}$ is divisible by 3.
The first author with S. Pasupulati \cite{krishnamoorthy2021note}, generalized the above results and established that, for given odd primes $p$, $q$, and a natural number $r$, the class number $h_{(m,p)}$ is divisible by $p$ when $m=q^r$. This observation implies that, for a fixed prime $p$, there exists an infinite family of imaginary quadratic fields within this specified parameter range. 

Observing the results above, it is apparent that while the cases of $\mu=1$ and $\mu=2$ have been extensively explored, the scenario for $\mu=\sqrt{2}$ is relatively less explored. Hence, in this paper, we are exploring the cases of $\mu=\sqrt{2}$.

There are two different ways we can try to get the results for the class number divisibility of the family $\Q(\sqrt{1-2m^k})$.
The first method is by fixing the value of $k$, exploring the possible values of $m$. The second method is by fixing $m$, exploring the possible values of $k$. Let us start with our first method, that is by fixing $k$, we explore the possible values of $m$. 
The class number of $\Q(\sqrt{1-2(4)^3})$ is $5$, which is not divisible by $3$. Through sage computations, when $k=3,$ we observe that out of first $250$ even values of $m$, only for $106$ even values, the class numbers are divisible by $3$. Hence throughout this paper, we consider only odd values of $m$.
\begin{remark}
There are several results on indivisibility of class numbers also there. For an example, Gauss proved that class number of $\Q(\sqrt{-p}), \ p\equiv3\pmod{4}$ is odd. From this, we get infinitely many quadratic fields whose class numbers are not divisible by $2$.  Hartung \cite{MR352040} proved that there exists an infinite family of imaginary quadratic fields whose class numbers are not divisible by $3$. Due to the observations made on the even values of $m$, one can think about the condition on even values of $m$, such that $h_{(m,k)}$ is not divisible by $k$.
\end{remark}
 By our first method of Approach, by fixing $k=p^r$, where $p$ is an odd prime number, we get the following \Cref{t5}.
\begin{theorem}\label{t5}
For an odd prime number $p$ and any natural number $r$, let $m$ be an odd integer greater than $2^{\frac{p-2}{p^{r-1}}}$ and $k = p^r$. Then $k$ divides the class number $h_{(m,k)}$.
In particular, if $r > 1$, then $k$ divides $h_{(m,k)}$ for all odd $m>1$.		
\end{theorem}

By using \Cref{t5}, we get the following result, which works for any odd number $k>1$.
\begin{cor}\label{c6} 
Let $k\geq3$ be an odd number with prime factorization $k=p_1^{k_1}p_2^{k_2}...p_n^{k_n}.$ For any odd integer $m>\max\{2^{\frac{p_i-2}{p_i^{k_i-1}}}: 1\leq i\leq n\},$ we get $k\mid h_{(m,k)}.$ 
\end{cor}

For any odd number $k$, Xie and Chao \cite{MR4173455}, and A. Hoque \cite{MR4430107} proved a particular case of Iizuka's conjecture $(n=1)$ by producing infinitely many $d$ such that class numbers of $\Q(\sqrt{d})$ and $\Q(\sqrt{d+1})$ are simultanously divisible by $k$. By using Corollary\ref{c6}, we also produced infinitely many $d$, with such property in Corollary\ref{CC}. However our collection of fields are different.

\begin{cor}\label{CC}
For any odd number $k>1$, there exist infinitely many integers $d$ such that the class numbers of $\mathbb{Q}(\sqrt{d})$ and $\mathbb{Q}(\sqrt{d+1})$ are divisible by $k$.
\end{cor}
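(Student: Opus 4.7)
The plan is to deduce Corollary~\ref{CC} from \Cref{t4} by choosing $d$ so that both $\Q(\sqrt{d})$ and $\Q(\sqrt{d+1})$ fall under a divisibility argument of the same flavour as \Cref{t4}. Concretely, set $d:=1-2m^{t}$ for $m=q^{r}$ with $q$ an odd prime and $r\ge 1$; then \Cref{t4} immediately yields $t\mid h_{\Q(\sqrt{d})}$, and the infinitude of the family of $m$'s in \Cref{t4} will supply the infinitude of $d$'s in the corollary once the companion divisibility is in place. Since $d+1=2(1-m^{t})=-2(m^{t}-1)$, the task reduces to showing $t\mid h_{K'}$ for $K':=\Q(\sqrt{-2(m^{t}-1)})$ for $m=q^{r}$ in an infinite subfamily.

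For this companion step I would mirror the method behind \Cref{t4}: exhibit an algebraic integer $\alpha\in\mathcal{O}_{K'}$ whose norm has the shape (small factor)$\cdot$(integer)$^{t}$, and extract from the ideal decomposition of $(\alpha)$ a non-principal ideal class of order exactly $t$. A natural candidate is $\alpha=(m^{t}-1)+\sqrt{-2(m^{t}-1)}$, whose norm is
\[
N(\alpha)=(m^{t}-1)(m^{t}+1)=m^{2t}-1=\prod_{d\mid 2t}\Phi_{d}(m),
\]
so the cyclotomic factorization of $m^{2t}-1$ should drive the argument: after analyzing the ramification of the primes dividing this norm in $K'$ and performing the standard size/unit estimate to rule out small principal representations, a cyclotomic ideal factor of order $t$ in $\mathrm{Cl}_{K'}$ should emerge. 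Carrying this out for $m$ ranging in an infinite subfamily of the one supplied by \Cref{t4} produces the required infinitely many $d$ (distinctness of the $d$'s following from the infinitude already proved in \Cref{t4}).

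The main obstacle is precisely running this companion class-group argument cleanly: $K'=\Q(\sqrt{-2(m^{t}-1)})$ is not directly within the hypothesis of \Cref{t4}, so one cannot invoke \Cref{t4} as a black box; and showing that the constructed ideal really has order $t$ (rather than a proper divisor) requires a sharp lower bound ruling out integer solutions $u^{2}+2v^{2}(m^{t}-1)=N$ for small proper divisors $N$ of $N(\alpha)$. A fallback, should this norm-form estimate prove awkward, is to restrict $m=q^{r}$ to a congruence subfamily (for instance, fixing $q$ modulo $t$) in which the square-free kernel of $-2(m^{t}-1)$ assumes a tractable shape, or alternatively to solve a Pell-type relation $c^{2}(1-2m^{t})+1=(c')^{2}(1-2(m')^{t})$ with $m,m'$ both prime powers and $c,c'$ integers, so that \Cref{t4} supplies divisibility on both sides simultaneously and no fresh class-group argument is needed.
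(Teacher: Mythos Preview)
Your choice $d=1-2m^{t}$ forces you into the companion field $K'=\Q(\sqrt{-2(m^{t}-1)})$, which is not covered by any result in the paper, and your sketch does not close the gap. The element $\alpha=(m^{t}-1)+\sqrt{-2(m^{t}-1)}$ has norm $(m^{t}-1)^{2}+2(m^{t}-1)=m^{2t}-1$, which is \emph{not} of the shape ``(small factor)$\cdot$(integer)$^{t}$'' that you yourself say is required: it factors as $(m^{t}-1)(m^{t}+1)$, not as a $t$-th power times a bounded cofactor, so there is no ideal $I$ with $I^{t}=(\alpha)$ up to small ramified pieces, and the order-$t$ conclusion cannot be extracted. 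The Pell-type fallback is equally unresolved.

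The paper avoids this entirely by a different, and much simpler, choice of $d$. Take $d:=4(1-2m^{t})^{t}$ with $m=q^{r}$. Since $t$ is odd, $\Q(\sqrt{d})=\Q(\sqrt{(1-2m^{t})^{t}})=\Q(\sqrt{1-2m^{t}})$, so \Cref{t4} gives $t\mid h_{\Q(\sqrt{d})}$. Writing $U:=2m^{t}-1\ge 2$, one has $(1-2m^{t})^{t}=(-U)^{t}=-U^{t}$ and hence $d+1=1-4U^{t}$; Louboutin's \Cref{T2} then yields $t\mid h_{\Q(\sqrt{d+1})}$. Both divisibilities are thus read off from theorems already in hand, no fresh class-group construction is needed, and the infinitude of such $d$ follows from the infinitude in \Cref{t4}.
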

We get the following result for the class numbers of bi-quadratic fields by using Corollary\ref{c6}.
\begin{cor}\label{biquad}
For any odd integer $k\geq3$, there exist infinitely many imaginary bi-quadratic fields whose class numbers are divisible by $k$.
\end{cor}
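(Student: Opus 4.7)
The plan is to build imaginary bi-quadratic fields out of the pairs already produced by \Cref{CC}, and then invoke \Cref{L} (the Cornell--Rosen class number formula for bi-quadratic fields) to transfer $t$-divisibility from a quadratic subfield to the bi-quadratic field.

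First I would apply \Cref{CC} to produce an infinite family of integers $d$ such that $t$ divides both $h_{\mathbb{Q}(\sqrt{d})}$ and $h_{\mathbb{Q}(\sqrt{d+1})}$. Tracing the construction in the proof of \Cref{CC}, these $d$ arise as $d=4(1-2m^t)^t$ with $m=q^r$ ranging over odd prime powers. Since $m\geq 3$ and $t\geq 3$ is odd, $1-2m^t<0$ and the odd power $(1-2m^t)^t$ is also negative, so $d<0$ and in fact $d+1<0$. Hence both $\mathbb{Q}(\sqrt{d})$ and $\mathbb{Q}(\sqrt{d+1})$ are imaginary quadratic fields.

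Next I would form the bi-quadratic field $L_d=\mathbb{Q}(\sqrt{d},\sqrt{d+1})$. Since $\mathbb{Q}(\sqrt{d})\subseteq L_d$ is imaginary, $L_d$ is an imaginary bi-quadratic field, and its three quadratic subfields are $\mathbb{Q}(\sqrt{d}),\ \mathbb{Q}(\sqrt{d+1}),\ \mathbb{Q}(\sqrt{d(d+1)})$. By \Cref{L}, $h_{L_d}$ is either the product of the class numbers of these three subfields, or half that product. Because $t$ is odd and divides $h_{\mathbb{Q}(\sqrt{d})}$, and $\gcd(t,2)=1$, in either case $t\mid h_{L_d}$.

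Finally, I would verify that the family $\{L_d\}$ is infinite. Distinct values of $m=q^r$ in the construction yield infinitely many distinct imaginary quadratic fields $\mathbb{Q}(\sqrt{1-2m^t})=\mathbb{Q}(\sqrt{d})$ by \Cref{t4} (and \Cref{c12}). Any bi-quadratic field has only three quadratic subfields, so an infinite collection of distinct $\mathbb{Q}(\sqrt{d})$ appearing as subfields of the $L_d$ forces the collection $\{L_d\}$ itself to be infinite. The only minor technical point is the potential factor of $1/2$ in \Cref{L}, which is harmless because $t$ is odd; beyond that the argument is a direct assembly of \Cref{CC} and \Cref{L}.
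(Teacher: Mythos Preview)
Your argument is correct and follows a genuinely different route from the paper. The paper does not use \Cref{CC}; instead it works directly from \Cref{t4}, taking $m$ to be a prime $q\equiv 1\pmod 4$ (so that $\mathbb{Q}(\sqrt{m})$ is an honest real quadratic field distinct from $\mathbb{Q}(\sqrt{1-2m^t})$, the latter having square-free part $\equiv 3\pmod 4$), and forms the compositum $K_m=\mathbb{Q}(\sqrt{1-2m^t},\sqrt{m})$. Then \Cref{L} is applied exactly as you do. Your construction $L_d=\mathbb{Q}(\sqrt{d},\sqrt{d+1})$ with $d=4(1-2m^t)^t$ is slicker in that it recycles the packaged output of \Cref{CC} and avoids the congruence condition on $m$ and the appeal to Dirichlet; the paper's version, on the other hand, needs only one quadratic subfield with $t$-divisible class number as input. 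One small point you glossed over: to know $L_d$ is genuinely bi-quadratic you need $\mathbb{Q}(\sqrt{d})\neq\mathbb{Q}(\sqrt{d+1})$, i.e.\ that $d(d+1)$ is not a perfect square; this is immediate since $\gcd(d,d+1)=1$ forces $-d$ and $-(d+1)$ to be consecutive positive integers that are both squares, which is impossible for $|d|>1$.
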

By approaching the divisibility problem through the second method, that is by fixing $m$, finding possible values for $k$, we get $k$ is in the set of prime numbers (\Cref{t4}).
\begin{theorem}\label{t4}
Let $m\geq3$ be an odd integer and $p$ be an odd prime number.\\
(a) For all but finitely many primes $p,$ $p\mid h_{(m,p)}.$
For a fixed $m$, the collection of fields  $\mathbb{Q}(\sqrt{1-2m^p})$ such that $p\mid h_{(m,p)}$ is infinite.\\
(b)  If $1-2m^p$ is square-free, then $p\mid h_{(m,p)}.$\\
(c) If $p_1, \ p_2$ be any pair of twin primes, then $p_1\mid h_{(m,p_1)}$ or $p_2\mid h_{(m,p_2)}.$
\end{theorem}
We get the square-free part of $k$ divisibility by using the above theorem.
 \begin{cor}\label{c7} 
Let $m\geq3$ be an odd integer and $p$ be an odd prime number.\\
(a) There exist a natural number $r$ such that for any odd number $k\geq3$ which is co-prime to $r,$ the square-free part of $k$ divides $h_{(m,k)}.$\\
(b) If $t\geq3$ is an odd integer such that $1-2m^k$ is a square-free integer, then the square-free part of $k$ divides $h_{(m,k)}.$
\end{cor}
 \begin{remark}
Let $p_1, \ p_2=p_1+2$ be a pair of twin primes. By using \Cref{t5}, if we choose odd $m>2^{p_1}$, then $p_i\mid h_{(m,p_i)}$ for $i=1,2$, which is stronger than of \Cref{t4} (c) part.
 \end{remark}
 To prove the main theorems, we begin by demonstrating that $\pm2^{\frac{k-1}{2}}(1+\sqrt{1-2m^k})$ is not a $p^{th}$ power in the ring of integers of $\mathbb{Q}(\sqrt{1-2m^k})$ (see Proposition \ref{p16}). By using this result, we then construct an element of order $p^r$ within the class group of $\mathbb{Q}(\sqrt{1-2m^k})$, where $k=p^r$ (refer to Proposition \ref{p19}).

We introduce certain preliminary concepts in Section\ref{p}, and in Section \ref{PT5}, we prove Theorem \ref{t5}. In Section \ref{PT4}, we provide the proof of Theorem \ref{t4}.
\section{Preliminaries}\label{p}
In a noteworthy work, Iizuka \cite{MR3724158} demonstrated the existence of an infinite family of pairs of imaginary quadratic fields $\mathbb{Q}(\sqrt{d})$ and $\mathbb{Q}(\sqrt{d+1})$ with $d \in \mathbb{Z}$, where the class numbers of both fields are simultaneously divisible by $3$.
Based on this result and also on some numerial evidences, Iizuka conjectured the following.
\begin{conj}[Iizuka]\label{conjecture}
For any prime $p$ and any positive integer $n,$ there is an infinite family of $n+1$ successive imaginary (or real) quadratic fields 
$$\Q(\sqrt{d}), \Q(\sqrt{d+1}), \cdots, \Q(\sqrt{d+n})$$ with $d \in \Z$ whose class numbers are divisible by $p.$
\end{conj}
A more comprehensive form of the aforementioned conjecture is stated below.
\begin{conj}[Iizuka]\label{conjecture1}
For any odd number $k$ and any positive integer $n,$ there is an infinite family of $n+1$ successive imaginary (or real) quadratic fields 
$$\Q(\sqrt{d}), \Q(\sqrt{d+1}), \cdots, \Q(\sqrt{d+n})$$ with $d \in \Z$ whose class numbers are divisible by $k.$
\end{conj}
A less stringent form of Iizuka's conjecture would be instead of considering consecutive numbers $d$, $d+1$, ..., $d+n$, we replace them with the numbers in an arithmetic progression ${d+ib}$ for a fixed integer $b$ and $i$ ranging from $1$ to $n$. More broadly, the class numbers of $\Q(\sqrt{d+p(i)})$ are divisible by $k$ for $i=1$ to $n$, where $p(x)\in\Z[x]$.
 \begin{conj}[Weaker version]\label{conjecture11}
For any odd number $k$ and any positive integer $n,$ there is an infinite family of $n+1$  imaginary (or real) quadratic fields 
$$\Q(\sqrt{d}), \Q(\sqrt{d+b}), \Q(\sqrt{d+2b}),\cdots, \Q(\sqrt{d+nb})$$ with $d \in \Z$ whose class numbers are divisible by $k.$
\end{conj}
 
Let K be a number field and Let S be a finite set of valuations containing all the archimedean valuations. then $R_S=\{x\in K:V(x)\geq 0 , \forall v\notin S\}$ is called the set of S-integers.\\
If $K=\Q$ and $S$ = set of all archimedean valuations in $\Q$, then $R_S=\Z$
\begin{lemma}{Siegel's Theorem (\cite{silverman2009arithmetic}, Chapter IX, Theorem 4.3).}\label{siegel}
Let K be a number field and S be a finite set of valuations containing all the archimedean valuations on K. Let $f(x)\in K[x]$ be a polynomial of degree $d\geq 3$ with distinct roots in the algebraic closure $\Bar{K}$ of K. Then the equation $y^2=f(x)$ has finitely many solutions in S-integers $x,y\in R_S$.
\end{lemma}
\begin{lemma}\label{c12}
For a fixed $k$, the collection of number fields $\mathbb{Q}(\sqrt{1-2m^k}),$ where $m\geq3$ is any odd positive integer is an infinite collection.
\end{lemma}
\begin{proof}
Consider the polynomial $f(x)=\frac{1-2x^k}{d_0},$ $d_0$ is any square-free integer. It has distinct roots in the algebraic closure of $\mathbb{Q}$.
Hence  by \Cref{siegel}, $z^2=\frac{1-2x^k}{d_0}$ has only finitely many integral solutions $(x,z)\in\Z\times\Z$.
Therefore $\{\mathbb{Q}(\sqrt{1-2m^k})  \ :  m \ \mathrm{ \geq 3} \ \mathrm{is} \
\mathrm{odd} \ \mathrm{positive} \ \mathrm{integer} \}$ is an infinite set.
\end{proof}
 We present a result by Yann Bugeaud and T.N. Shorey concerning a Diophantine equation. Consider coprime positive integers $D_1$ and $D_2$, let $D = D_1D_2$, and $m \geq 2$ be an integer coprime with $D$. Let $\lambda \in \{1, \sqrt{2}, 2\}$, where $\lambda = 2$ when $m$ is even. The equation of interest is $D_1x^2 + D_2 = \lambda^2 m^k$.

We denote $F_i$ as the Fibonacci sequence defined by $F_0=0$, $F_1=1$, and $F_i=F_{i-1}+F_{i-2}$, and $L_i$ as the Lucas sequence defined by $L_0=2$, $L_1=1$, and $L_i=L_{i-1}+L_{i-2}$ for $i\geq 2$. \\
We define subsets $F$, $G$, and $H$ of $\mathbb{N}\times\mathbb{N}\times\mathbb{N}$ as follows:
$$F=\{(F_{i-2\epsilon},L_{i+\epsilon},F_i):i\geq 2,\epsilon\in\{\pm1\}\},$$
$$G=\{(1,4m^r-1,m):m\geq 2, r\geq 1\},$$
$$H=\{(D_1,D_2,m):\exists \ r,s\in\mathbb{N}\text{ such that }D_1s^2+D_2=\lambda^2m^r \text{ and } 3D_1s^2-D_2=\pm\lambda^2\}.$$
Let $N(\lambda, D_1, D_2, m)$ be defined as the count of pairs $(x, k) \in \mathbb{Z}^+ \times \mathbb{Z}^+$ satisfying the equation $D_1x^2 + D_2 = \lambda^2m^k$. Let $S = \{(2,13,3,4),(\sqrt{2},7,11,9),(\sqrt{2},1,1,5),(\sqrt{2},1,1,13),(2,1,3,7), \\(1,1,19,55),(1,1,341,377),(1,2,1,3),(2,7,1,2)\}$. 
\begin{theorem}\label{t13} (Corollary 1, Theorem 2 of \cite{bugeaud2001number}). Let $m \geq 2$ be an integer.\\
If $(\lambda, D_1, D_2, m) \notin S$ and $(D_1, D_2,m) \notin F \cup G \cup H$, then $N(\lambda,D_1,D_2,m) \leq 2^{\omega(m)-1} $,
where $\omega(m)$ is the number of distinct prime divisors of $m$.
If $(\lambda, D_1, D_2, m) \in S$, then $N(\lambda,D_1,D_2,m) = 2 $.
\end{theorem}
\begin{lemma}\label{l15}
Let $m \geq 3$ be an odd number.\\
(i) For any positive integer $D$ and $\mathrm{gcd}(D,m)=1$, $N(\sqrt{2},D,1,m)$ is finite.\\
(ii) $N(\sqrt{2}, 2m-1,1,m) \leq 2^{\omega(m)-1} $.
	\end{lemma}
	\begin{proof}
(i) Let $D_1=D,D_2=1,\lambda=\sqrt{2}$. From \Cref{t13}, the Diophantine equation $Dx^2+1=2m^k$ has a finite number of solutions if $(D_1,D_2,m)=(D,1,m)\notin F\cup G\cup H$. If $(D,1,m)\in F$, then $(D,1,m)=(F_{i-2\epsilon},L_{i+\epsilon},F_i)$ for some $i,\epsilon$. By comparing the second coordinates, we get $L_{i+\epsilon}=1.$ Hence $i=2 \text{ and } \epsilon=-1$, which implies that $F_i=1.$ By comparing the third coordinates, we get $m=F_i=1$, which is a contradiction to $m\geq 3$. If $(D,1,m)\in G$, then $(D,1,m)=(1,4m^r-1,m).$
Hence $4m^r-1=1$, which is a contradiction to $m\in\Z$.
If $(D,1,m)\in H$, then $\exists$ $r,s\in\mathbb{N} \ni Ds^2+1=2m^r,$   $3Ds^2-1=\pm2.$
Suppose $3Ds^2-1=2$, then $Ds^2=1.$ Hence $2m^r-1=1.$ Thus $m=1,$ which is not possible. Suppose $3Ds^2-1=-2,$ then $D=\frac{-1}{3s^2},$ which is a contradiction to $D\in\Z.$ Thus $(D,1,m)\notin F\cup G\cup H$.
Hence (i) follows.\\
(ii) Consider $D_1=2m-1,$ $D_2=1, \ \lambda=\sqrt{2}$. We see that 
$(\lambda,D_1,D_2,m) = (\sqrt{2}, 2m-1,1,m) \notin S$.
Clearly $\mathrm{gcd}(D_1,D_2)=\mathrm{gcd}(2m-1,1)=1, \  \mathrm{gcd}(D_1D_2,m)=\mathrm{gcd}(2m-1,m)=1$ and $m\geq3$.\\
Proceeding as in the proof of  (i), $(2m-1,1,m)\notin F\cup G\cup H$.
Hence  $N(\sqrt{2}, 2m-1,1,m) \leq 2^{\omega(m)-1} $ by \Cref{t13}. \end{proof}
  \section{Proof of Theorem \ref{t5}}\label{PT5}
  In this section we mainly focus of the proof of the \Cref{t5} and it's corollaries.
\begin{lemma}\label{division Lemma}
For any integer $m>2^{\frac{p-2}{p^{r-1}}}$ and for any natural number $r,$ $2m^{p^{r-1}}-1$ does not divide $m^{p^{r}-p^{r-1}}-1.$
  \end{lemma}
\begin{proof}
Let $m>2^{\frac{p-2}{p^{r-1}}}$, that is $2m^{p^{r-1}}-1>2^{p-1}-1.$ \\
If $2m^{p^{r-1}}-1$ divides $m^{p^{r}-{p^{r-1}}}-1,$ 
then we have,
$$(2m^{p^{r-1}}-1)\mid((m^{p^{r}-p^{r-1}}-1)-(2m^{p^{r-1}}-1)), \  \mathrm{i.e.,} \ (2m^{p^{r-1}}-1)\mid m^{p^{r-1}}(m^{p^{r}-2p^{r-1}}-2).$$ 
Since $(2m^{p^{r-1}}-1,m^{p^{r-1}})=1$, we get, $$(2m^{p^{r-1}}-1)\mid (m^{p^{r}-2p^{r-1}}-2).$$
We also have,
$$(2m^{p^{r-1}}-1)\mid(m^{p^r-2p^{r-1}}-2)-2(2m^{p^{r-1}}-1)).$$ 
Hence $(2m^{p^{r-1}}-1)\mid(m^{p^r-3p^{r-1}}-2^2).$ Repeating the above process, we get, $$(2m^{p^{r-1}}-1)\mid(m^{p^r-np^{r-1}}-2^{n-1}), \ 1\leq n\leq p.$$ In particular for $n=p$, we get $(2m^{p^{r-1}}-1)\mid (2^{p-1}-1),$  which is a contradiction to\\ $2m^{p^{r-1}}-1>2^{p-1}-1.$ Hence the result holds.
\end{proof}
\begin{prop}\label{p16}
(a) Let $m>2^{\frac{p-2}{p^{r-1}}}$ be an odd integer and $k=p^r$, where $p$ is an odd prime number and $r$ is any natural number.
Let $\alpha=1+\sqrt{1-2m^{k}}.$ Then $\pm2^{\frac{k-1}{2}}\alpha$ is not a $p^{th}$ power in the ring of integers of $\Q(\sqrt{1-2m^k}).$\\
(b) Let $m\geq3$ be an odd integer and $p$ be an odd prime number. Let $\alpha=1+\sqrt{1-2m^p}.$ For all but finitely many odd primes $p$, $\pm2^{\frac{p-1}{2}}\alpha$ is not a $p^{\mathrm{th}}$ power in the ring of integers of $\mathbb{Q}( \sqrt{1-2m^p} )$.\\
		(c) Let $m\geq3$ be an odd integer and $p$ be an odd prime number. Let $\alpha=1+\sqrt{1-2m^p}.$ If $1-2m^p$ is square-free, then $\pm2^{\frac{p-1}{2}}\alpha$ is not a $p^{th}$ power in the ring of integers of $\mathbb{Q}(\sqrt{1-2m^p}).$\\
		(d) Let $m\geq3$ be an odd integer and $p_1, \ p_2$ be any pair of twin primes. Then at least for one of the $p_i$, $\pm2^{\frac{p_i-1}{2}}(1+\sqrt{1-2m^{p_i}})$ is not a $p_i^{th}$ power in the ring of integers of $\mathbb{Q}(\sqrt{1-2m^{p_i}}).$
	\end{prop}
	\begin{proof}
(a): Let $K=\Q\left(\sqrt{1-2m^k}\right)$. Let $\mathbb{Z}_K$ be the ring of integers of $K$. To establish the claim, it is sufficient to prove it for $2^{\frac{k-1}{2}}\alpha$. Let $1-2m^k=n^2d,$ where $n\geq1$ and $d$ is square-free. Since $m$ is an odd positive integer, it follows that $d\equiv3\pmod4$. From this we get, $\Z_K=\Z+\Z\sqrt{d}$.

Assume that $2^{\frac{k-1}{2}}\alpha=\beta^p$ is a $p^{th}$ power in $\Z_K$, where $\beta=a+b\sqrt{d}\in\Z_K$. Then $\beta^2-2a\beta+N=0$, where $N=a^2-db^2.$ Hence $\beta^{t+2}-2a\beta^{t+1}+N\beta^t=0$ for $t\geq1$. Comparing the real and imaginary parts on the both sides of $\beta^{t+2}-2a\beta^{t+1}+N\beta^t=0$, we get, $a_{t+2}-2aa_{t+1}+Na_t=0$ and $b_{t+2}-2ab_{t+1}+Nb_t=0$, for $t\geq1$, where $\beta^t=a_t+b_t\sqrt{d}.$ It follows from induction on $t$ that $a$ divides $a_t$ for any odd number $t\geq1$. Since $b_1=b$ and $b_2=2ab$, it follows from induction that $b$ divides $b_t$ for $t\geq1$. In Particular, $a_p=2^{\frac{k-1}{2}}$ and $b_p=2^{\frac{k-1}{2}}n$, for $t=p$, we get $a\mid2^{\frac{k-1}{2}}$ and $b\mid2^{\frac{k-1}{2}}n.$ Moreover, taking norm in the equality $2^{\frac{k-1}{2}}\alpha=\beta^p,$ that is, $N\left(2^{\frac{k-1}{2}}\left(1+\sqrt{1-2m^{k}}\right)\right)=N\left(\left(a+b\sqrt{d}\right)^p\right)$, we get $2^km^k=(a^2-b^2d)^p$, That is $2^{p^r}m^{p^r}=(a^2-b^2d)^p$. Hence we get $2^{p^{r-1}}m^{p^{r-1}}=a^2-db^2.$ Hence $a$ and $b$ both odd or both even.\\
\textbf{Case 1 (Both $a$ and $b$ odd):}\\
If $r>1$, then taking  $2^{p^{r-1}}m^{p^{r-1}}=a^2-db^2$ modulo 4, we get $0\equiv 2\pmod4$, which is a contradiction. Suppose $r=1.$
We have  $a\mid2^{\frac{p-1}{2}}$ and $b\mid2^{\frac{p-1}{2}}n,$ which implies $a=\pm1$ and $b\mid n.$ The equation $2^{p^{r-1}}m^{p^{r-1}}=1-db^2$ becomes $2m-1=-db^2$. Thus $(2m-1)\mid(2m^p-1)$ and hence we have $(2m-1)\mid((2m^p-1)-(2m-1)),$ which is $(2m-1)\mid2m(m^{p-1}-1).$ We know that $(2m-1,2m)=1$. Hence $(2m-1)\mid (m^{p-1}-1),$ which is a contradiction to \Cref{division Lemma}.\\
\textbf{Case 2 (Both $a$ and $b$ even):}\\
We have  $a\mid2^{\frac{k-1}{2}}$ and $b\mid2^{\frac{k-1}{2}}n.$ Hence $a=\pm2^{s}$ and $b=2^tx$, $x$ is odd which divides $n$.\\
\textbf{Subcase 1 $(s>t)$:}\\
The equation $2^{p^{r-1}}m^{p^{r-1}}=a^2-db^2$ becomes $2^{p^{r-1}}m^{p^{r-1}}=2^{2t}(2^{2(s-t)}-x^2d).$
Since $s>t$ and $x,d$ are odd numbers, the maximum power of 2 dividing the right side of the above equation is $2t$. But the maximum power of 2 dividing the left hand side is an odd number, which is not possible.\\
\textbf{Subcase 2 $(t>s)$:}\\
The equation $2^{p^{r-1}}m^{p^{r-1}}=a^2-db^2$ becomes $2^{p^{r-1}}m^{p^{r-1}}=2^{2s}(1-2^{2(t-s)}x^2d).$ Since $t>s$ , the maximum power of 2 dividing right side of the above equation is $2s$. But maximum power of 2 dividing left hand side is an odd number, which is not possible.\\
\textbf{Subcase 3 $(t=s)$:}\\
The equation $2^{p^{r-1}}m^{p^{r-1}}=a^2-db^2$ becomes $2^{p^{r-1}}m^{p^{r-1}}=2^{2s}(1-x^2d).$\\
Clearly $1-x^2d\equiv2\pmod4.$ Hence $2s=p^{r-1}-1.$ Thus $2m^{p^{r-1}}=1-x^2d$, which is $2m^{p^{r-1}}-1=-x^2d.$ We know that $x$ divides $n$ and $-n^2d=2m^{p^r}-1$. Hence $(2m^{p^{r-1}}-1)\mid (2m^{p^r}-1)$. From this we get, $(2m^{p^{r-1}}-1)\mid ((2m^{p^r}-1)-(2m^{p^{r-1}}-1))$, that is $(2m^{p^{r-1}}-1)\mid 2m^{p^{r-1}}(m^{p^r-p^{r-1}}-1).$ Clearly $(2m^{p^{r-1}}-1,2m^{p^{r-1}})=1.$ 
Hence $(2m^{p^{r-1}}-1)\mid (m^{p^r-p^{r-1}}-1).$ By \cref{division Lemma}, this is not possible. Hence the proof follows.\\
(b) : If $\pm2^{\frac{p-1}{2}}\alpha$ is a $p^{th}$ power, then as in the proof of (a), we get $2m=a^2-db^2$ and $a\mid2^{\frac{p-1}{2}}$, $b\mid2^{\frac{p-1}{2}}n$. Taking modulo over 4 on both sides, we get $a^2+b^2\equiv2\pmod4.$ Thus $a$ and $b$ both odd. Hence $a=\pm1$, $b\mid n$ and $2m=1-db^2$, which we write $(2m-1)(\frac{n}{b})^2+1=2m^p$. It follows from Lemma \ref{l15} (ii), $p$ takes at most $2^{\omega(m)-1}$ values  and the desired result follows.\\
(c): If $1-2m^p$ is square-free, then $n=1$ and $1-2m^p=d$.  If $\pm2^{\frac{p-1}{2}}\alpha$ is a $p^{th}$ power in the ring of integers of $\mathbb{Q}(\sqrt{1-2m^p}),$ then as in the proof of part (b), we get, $b\mid n$ and $2m=1-db^2.$ Hence $b=\pm1$ and $1-2m=d$. Equating the values of $d$, we get $p=1$, which is a contradiction. 
(d) :  Let $p_1, \ p_2$ be any pair of twin primes such that $p_2-p_1=2$. Assuming the contrary, as in the proof of (b), we get $(2m-1)(\frac{n}{b})^2+1=2m^{p_i}$ for $i=1,2$. Hence, $(2m-1)\mid (2m^{p_i}-1)$ for $i=1,2$. Thus $(2m-1)\mid2m^{p_1}(m+1)(m-1).$ Since $2m-1=2(m-1)+1$ is co-prime to $m-1$ and also co-prime to $2m^{p_1}$, we get $(2m-1)\mid(m+1),$ which implies that $2m-1\leq m+1$. Thus $m\leq2$, which is a contradiction to $m\geq3$. Hence the result follows.

	\end{proof}

 \begin{prop}\label{p19}
Let $m\geq3$ be an odd integer, $p$ be an odd prime and $r$ be a natural number. Let $\alpha=1+\sqrt{1-2m^k},$ where $k=p^r.$ 
If $\pm2^{\frac{k-1}{2}}\alpha$ is not a $p^{th}$ power in ring of integers of $\Q(\sqrt{1-2m^k})$, then $k\mid h_{(m,k)}.$
\end{prop}
\begin{proof}
Let $K=\Q\left(\sqrt{1-2m^k}\right).$
Write $1-2m^k=n^2d$ , $n\geq1$ and $d$ square-free.  Since $m$ is odd positive integer, we get $d\equiv3\pmod4$. 
Now, $N_{K/\mathbb{Q}}(\alpha)=2m^k$ and any prime dividing $m$ splits in $K$. Since $d\equiv 3\pmod4$ the ideal $\left\langle 2 \right\rangle$ is ramified. Let $\left\langle2\right\rangle=P^2$ for some prime ideal $P$. Let $m=p_1^{r_1}p_2^{r_2}...p_n^{r_n}$, each prime $p_i$ is odd and pairwise co-prime, be the prime factorization of $m$. Since $N(\left\langle\alpha\right\rangle)=N_{K/\mathbb{Q}}(\alpha)=2p_1^{r_1k}p_2^{r_2k}...p_n^{r_nk}$, the prime ideal decomposition of $\left<\alpha\right>$ must have prime ideals whose norms are $2,p_1,p_2,...,p_r$. Suppose the prime decomposition of $\left\langle\alpha\right\rangle$ has both prime ideals  $P_i$ and $P_i'$ as factors, where $<p_i>=P_iP_i'$, then we get $p_i$ divides the real part of $\alpha$. But the real part of $\alpha$ is $1$. Thus the factors of the prime decomposition of $\left\langle\alpha\right\rangle$ must have exactly one of prime ideal lies above $p_i$, for each $p_i$. Hence the prime decomposition of $\left\langle\alpha\right\rangle$ is given by $\left\langle\alpha\right\rangle=PP_1^{t_1}P_2^{t_2}...P_n^{t_n}$, where each $P_i$ is a prime ideal lies above $p_i$ and each $t_i$ is a positive integer. Since $p_i$ splits over $K$, we have $N(P_i)=p_i$ for all $i$. Hence we get $t_i=r_ik$.
Consider the ideal $I=PP_1^{r_1}P_2^{r_2}...P_n^{r_n}$ of $\mathcal{O}_K$.
We have,
\begin{align*}
I^k=P^kP_1^{t_1}P_2^{t_2}...P_n^{t_n}=\left\langle 2\right\rangle^{\frac{k-1}{2}}PP_1^{t_1}P_2^{t_2}...P_n^{t_n}=\left\langle    2\right\rangle^{\frac{k-1}{2}}\left\langle\alpha\right\rangle=\left\langle 2^{\frac{k-1}{2}}\alpha \right\rangle.
\end{align*}
We claim that the order of  $I$ is $p^r$. If not, then the order of $I$ must be $p^h$, where $0\leq h\leq r-1$. Thus the order of $I$ divides $p^{r-1}$. Hence $I^{p^{r-1}}=\left\langle\beta\right\rangle$ for some $\beta\in \mathcal{O}_K.$ Thus $I^{p^r}=\left\langle\beta^p\right\rangle=\left\langle 2^{\frac{k-1}{2}}\alpha\right\rangle$. 
Thus $\beta^p= u 2^{\frac{k-1}{2}}\alpha$ for some unit $u$ in $\mathcal{O}_K$. Since $1-2m^k<-3$, the only units of $\mathcal{O}_K$ are 1 and -1. Hence $\beta^p= \pm 2^{\frac{p-1}{2}}\alpha$, which is a contradiction to $\pm2^{\frac{p-1}{2}}\alpha$ is not a $p^{th}$ power in $\mathcal{O}_K$. Hence the order of $I$ is $p^r$. Hence $k\mid h_{(m,k)}.$
\end{proof}
\hspace{-1.1cm}{\it \textbf{Proof of \Cref{t5}}}. Let $m>2^{\frac{p-2}{p^{r-1}}}$  be an odd number. Let $k=p^r$ for some odd prime $p$ and a natural number $r.$ From Proposition\ref{p16} (a) and Proposition \ref{p19} we observe that $k\mid h_{(m,k)}.$ 
When $r>1$, we have $2>2^{\frac{p-2}{p^{r-1}}}$. Hence when $r>1,$ the result is true for any odd natural number $m>1.$\vspace{0.1cm}\\
{\it \textbf{Proof of Corollary \ref{c6}}}. 
Let $k\geq3$ be an odd number with prime factorization $k=p_1^{k_1}p_2^{k_2}...p_n^{k_n}.$ Consider any odd integer $m>\max\{2^{\frac{p_i-2}{p_i^{k_i-1}}}: 1\leq i\leq n\}.$\\
We have $\Q(\sqrt{1-2m^k})=\Q\left(\sqrt{1-2(m^{\frac{k}{p_i^{k_i}}})^{p_i^{k_i}}}\right).$ 
By the \Cref{t5}, $p_i^{k_i}\mid h_{(m,k)}.$ 
Since $p_i^{k_i}$ is arbitrary, we get $k\mid h_{(m,k)}.$
\vspace{0.1cm}\\
	{\it \textbf{Proof of Corollary \ref{CC}}}: 
	It follows from \Cref{t5}, any odd natural number $k$ divides the class number of $\Q(\sqrt{4(1-2m^k)^k})=\Q\left(\sqrt{1-2m^k}\right) \text{for any odd}  \ m>\max\{2^{\frac{p_i-2}{p_i^{k_i-1}}}: 1\leq i\leq n\}, $ where $k=p_1^{k_1}p_2^{k_2}...p_n^{k_n}.$
	Let $U=2m^k-1.$ Then
	$k$ divides the class number of $\Q\left(\sqrt{1-4U^k}\right)=\Q\left(\sqrt{1-4(2m^k-1)^k} \right)=\Q\left(\sqrt{4(1-2m^k)^k+1}\right)$ by Theorem 1, \cite{louboutin2009divisibility}. Let $d=4(1-2m^k)^k.$ Then $k$ divides class numbers of $\Q(\sqrt{d}),\Q(\sqrt{d+1}).$ The infiniteness of $d$ follows from \Cref{t4}.\\
	{\it \textbf{Proof of Corollary \ref{biquad}}}
	Fix an odd number $k=p_1^{k_1}p_2^{k_2}...p_n^{k_n}\geq3$. \\Let $S_1 =
	\{m>\max\{2^{\frac{p_i-2}{p_i^{k_i-1}}}: 1\leq i\leq n\}:$ $m$ is not a square, $m\equiv1\pmod4 $. Let $K=\{\mathbb{Q}(\sqrt{1-2m^k})$, $k \mid h_K \}$.
	Clearly this is an infinite set. For $m\in S_1$, 
	consider the bi-quadratic field $K_m=\mathbb{Q}(\sqrt{1-2m^k},\sqrt{m})$. Denote $L_m^1=\mathbb{Q}(\sqrt{1-2m^k}),L_m^2=\mathbb{Q}(\sqrt{m}),$ and $L^3_m=\mathbb{Q}(\sqrt{1-2m^k}\sqrt{m})$. Observe that $L_m^1\neq L_m^2$ because $1-2m^k\equiv3\pmod4$. 
	Since $m$ is not a square, $L_m^1,L_m^2$ and $L_m^3$ are three distinct quadratic sub fields of $K_m$. Let $h_m,h_m^1,h_m^2,h_m^3$ be the class numbers of $K_m,L_m^1,L_m^2,L_m^3$ respectively. Then by Lemma 2, \cite{cornell1991note}, we have $h_m=\frac{h_m^1h_m^2h_m^3}{2^i},i=0,1$. 
	Since $m \in S_1$, $k$ divides $h_m^1$. Since $k$ is odd, $k$ must divide $h_m$. The infiniteness of the set $\{K_m:m\in S_1\}$ follows from infiniteness of the set $S_1$ and Siegel's Theorem.
\section{Proof of Theorem \ref{t4} }\label{PT4}
In this section, we mainly focus in the proof of the \Cref{t4} and it's corollary. We state some remarks related to our theorems.\\
\hspace{-1.1cm}{\it \textbf{Proof of \Cref{t4}}}.(a) : For a given odd integer $m\geq3$, by Proposition \ref{p16} (b), for all but finitely many odd primes $p,$ $\pm2^{\frac{p-1}{2}}(1+\sqrt{1-2m^p})$ is not a $p^{th}$ power in the ring of integers of $\Q(\sqrt{1-2m^p}).$ By Proposition \ref{p19}, we get $p\mid h_{(m,p)},$
	for all but finitely many odd primes $p$.  Given positive square-free integer $D$, it can be the square-free part of $2m^p-1$  for only finitely many primes $p$ by Lemma \ref{l15} (i). Hence for a fixed $m \geq 3$ odd integer, the collection of fields $\mathbb{Q}(\sqrt{1-2m^p})$ such that $p\mid h_{(m,p)}$
	is infinite.\\
	(b) : Assume that $2m^p-1$ is square-free for an odd prime $p$ and $m\geq3$ an odd number.
By Proposition \ref{p16} (c)  and Proposition \ref{p19}, we get $p\mid h_{(m,p)}.$\\
(c) : The proof follows from Proposition \ref{p16} (d) and Proposition \ref{p19}.\\
{\it \textbf{Proof of Corollary \ref{c7}}}. 
(a) : Let $J$ be the set of finitely many exceptional primes arising in\\ \Cref{t4} (a). If $J=\emptyset$, then set $r=1$. If $J=\{p_1,p_2,...,p_n\}$, then set $r=p_1p_2...p_n$. Consider an odd integer $k\geq3,$ which is co-prime to $r$. Suppose $p$ is a prime such that $p\mid k$.  Since $\mathbb{Q}(\sqrt{1-2m^k})=\mathbb{Q}(\sqrt{1-2(m^{\frac{k}{p}})^p})$, by our choice of $r$ and by \Cref{t4} (a), $p\mid h_{(m,k)}$. Since $p$ is an arbitrary prime divisor of $k$, the square-free part of $k$ divides the class number of $\mathbb{Q}(\sqrt{1-2m^k})$.\\
(b) : Suppose $p$ is a prime which divides an odd integer $k\geq3$. Assume that $1-2m^k$ is square-free. Since $\mathbb{Q}(\sqrt{1-2m^k})=\mathbb{Q}(\sqrt{1-2(m^{\frac{k}{p}})^p})$, by \Cref{t4} (b), $p$ divides the class number of $\mathbb{Q}(\sqrt{1-2m^k})$. Since $p$ is an arbitrary prime divisor of $k$, we conclude that the square-free part of $k$ divides $h_{(m,k)}.$

\begin{remark}
Using SageMath, we get the class numbers of $\Q(\sqrt{1-2m^3})$ are divisible by $3$ only for $106$ integer values of $m$ between the range $-400$ to $-1$ . In general, it will be an interesting question to find a condition on $k$ and negative values of $m$ such that $k\mid h_{(m,k)}.$ 
\end{remark}
\begin{remark}
Twin prime conjecture states that there are infinitely many twin primes. If the twin prime conjecture is true, then for each pair of twin primes we can have at least one prime $p$ which divides class number of $\mathbb{Q}(\sqrt{1-2m^p})$. This shows that there are infinitely many primes $p$ which divides the class number of $\mathbb{Q}(\sqrt{1-2m^p})$.
\end{remark}

 \begin{remark}
  For a given prime number p, several authors constructed infinitely many imaginary quadratic fields whose class numbers are divisible by p. However positive proportion of such fields are not yet known for even a specific prime $p$. We intend to attempt this in our future work.
 \end{remark}

\hspace{-1.35cm}Table 1 : Examples which satisfy the hypothesis of Theorem \ref{t4}(c), i.e. $p,$ $p+2$ are twin primes.
{\renewcommand{\arraystretch}{1.1}
\begin{table}[ht]
\centering
\begin{tabular}{|c|c|c|c|}
\hline

$m$ & Twin primes $(p,p+2)$ & $h_{(m,p)}$ & $h_{(m,p+2)}$  \\
\hline 
$3$ & $(3,5)$   & $2\times3$ & $2^2\times 5$\\
\hline
$3$ & $(11,13)$ & $2^5 \times 11
$ &  $2^7 \times 13
$ \\
\hline

$15$ & $(5,7)$ & $2 \times 3 \times 5^2 \times 7
$ &  $2 \times 7 \times 1087
$ \\
\hline
$3$  & $(17,19)$ & $2^2 \times 3 \times 11 \times 17

$ & $2^2 \times 11 \times 13 \times 19
$\\
\hline
$35$ & $(5,7)$ &  $2^5 \times 5^2 \times 11
$ & $2^6 \times 7 \times 19 \times 47
$\\
\hline
$7$ & $(11,13)$ &  $2^4 \times 3 \times 11 \times 79
$     &   $2^5 \times 5 \times 13 \times 89
$\\
\hline
$29$ & $(5,7)$ & $2^5 \times 5^2
$ &  $
2^3 \times 3 \times 7 \times 523
$ \\
\hline 

\end{tabular}
\vbox{\vspace{0.35cm} Divisibility of class numbers by twin primes, $p\mid h_{(m,p)}$ or $(p+2)\mid h_{(m,p+2)}$.}
\end{table}
}
\newpage
\hspace{-1.35cm}Table 2: 
 Some examples which satisfy the hypothesis of Theorem \ref{t5}, i.e., If $k=p^r$ and $r>1$, then we can choose any odd $m\geq3$. If $k=p$, then we can choose any $m>2^{p-2}$.
{\renewcommand{\arraystretch}{2.0}
\begin{table}[ht]
\centering
\begin{tabular}{|c|c|c|}
\hline

$k$ & $m$ & $h_{(m,k)}$  \\
\hline
$5$ & $27(27>2^{5-2})$ & $2^3\times3^4\times5$ \\
\hline
$3^2$ & $15$ & $2^2 \times 3^4 \times 739
$  \\
\hline
$5^2$ &  $3$ & $2^4 \times 5^2 \times 43 \times 79
$ \\
\hline
$7$  &   $33(33>2^{7-2})$ & $2^7 \times 7 \times 199
$  \\
\hline
$3^2$ &  $11$ & $2^3 \times 3^2 \times 659
$ \\
\hline
\end{tabular}
\vbox{\vspace{0.3cm}The divisibility of class numbers $h_{(m,k)}$ by $k$.}
\end{table}
}

\hspace{-1.3cm}Table 3 : Examples which satisfy the hypothesis of Theorem \ref{t4}(b), i.e. $1-2m^p$ is squarefree.
{\renewcommand{\arraystretch}{2.2}
\begin{table}[ht]
\centering
\begin{tabular}{|c|c|c|c|}
\hline
$p$ & $m$ & $2m^p-1$ & $h_{(m,p)}$  \\
\hline
$3$ & $11$ &  $3\times887$ &  $2^4 \times 3
$\\
\hline
$5$ & $17$ &  $3\times37\times25583$ & $2^5 \times 5^2
$\\
\hline
$7$ & $21$ & $3602177081$ & $2^2 \times 3 \times 7 \times 1489
$ \\
\hline
$11$ & $9$ & $7\times439 \times607 \times33647$ & $2^9 \times 3^3 \times 11
$\\
\hline
$13$ & $7$ & $23 \times 121697 \times 1816247$ & $2^5 \times 5 \times 13 \times 89$\\
\hline
\end{tabular}
\vbox{\vspace{0.35cm}The divisibility of class numbers $h_{(m,p)}$ by $p$ when $1-2m^p$ is square-free.}
\end{table}
}

 \section*{Acknowledgement}
	We thank IISER, TVM for providing the excellent working conditions. We would like to thank Azizul Hoque and Sunil Kumar Pasupulati for helpful discussions and suggestions. We also would to thank Jaitra Chattopadhyay, Subham Bhakta and Jayanta Manoharmayum for helpful comments. The first author's research was supported by SERB grant CRG/2023/009035. The second author wishes to thank CSIR for financial support. We used Sage Math for calculations, hence we thank Sage Math for this. \\
	\\
	Declarations \\
Conflict of interest. The authors declare that there is no conflict of interest.
	
\end{document}